\newtheorem{thm}{Theorem}[section]
\newtheorem{theorem}[thm]{Theorem}
\newtheorem{corollary}[thm]{Corollary}
\newtheorem{lemma}[thm]{Lemma}
\newtheorem{prop}[thm]{Proposition}
\theoremstyle{definition}
\newtheorem{definition}{Definition}[section]
\newtheorem{example}{Example}[section]
\newcommand{\M}{\mathcal{M}}
\newcommand{\defin}[1]{{\it #1}}
\newcommand{\N}{\mathbb{N}}
\newtheorem*{riemann-mapping-theorem}{Riemann Mapping  Theorem}{\bf}{\it}
{\bf}{\it}
{\bf}{\it}
{\bf}{\it}
{\bf}{\it}
{\bf}{\it}
{\bf}{\it}
{\bf}{\it}
{\bf}{\it}
\newtheorem*{carker}{Carath\'eodory Kernel Theorem}{\bf}{\it}
{\bf}{\it}
\newtheorem*{bpt}{Beurling Projection Theorem}{\bf}{\it}
\newenvironment{pf*}[1]{\proof[#1]}{\endproof}
\newcommand{\beq}{\begin{equation}}
\newcommand{\eeq}{\end{equation}}
\newtheorem{defn}{Definition}[section]
\newcommand{\riem}{{\hat{\CC}}}
\newcommand{\dist}{\operatorname{dist}}
\newcommand{\eps}{\epsilon}
\numberwithin{equation}{section}
\newcommand{\supp}{\operatorname{Supp}}
\newcommand{\PP}{{\mathbb P}}
\newcommand{\CC}{{\mathbb C}}
\newcommand{\RR}{{\mathbb R}}
\newcommand{\EE}{{\mathbb E}}
\newcommand{\NN}{{\mathbb N}}
\newcommand{\DD}{{\mathbb D}}
\newcommand{\hC}{{\hat{\mathbb C}}}
\newcommand{\ignore}[1]{{}}
\title{Carath\'eodory convergence and harmonic measure}
\author{Ilia Binder}
\address{Department of Mathematics, University of Toronto, Bahen Centre, 40 St. George St., Toronto, Ontario, CANADA M5S 2E4}
\email{ilia@math.toronto.edu}
\thanks{I.\ B.\ was supported in part by an NSERC Discovery grant.}
\author{Cristobal Rojas}
\address{Departmento de Matematicas,
Universidad Andres Bello.
Republica 498, 2do piso,
Santiago, Chile.}
\email{crojas@mat-unab.cl}
\author{Michael Yampolsky}
\address{Department of Mathematics, University of Toronto, Bahen Centre, 40 St. George St., Toronto, Ontario, CANADA M5S 2E4}
\email{yampol@math.toronto.edu}
\thanks{M.\ Y.\ was supported in part by an NSERC Discovery grant.}
\begin{document}
\begin{abstract}
We give several new characterizations of Carath\'eodory convergence of simply connected domains. We then investigate how different definitions of convergence generalize to the multiply-connected case.
\end{abstract}
\date{\today}
\maketitle

\section{Introduction}

The motivation for this paper came from the continued series of works of the authors on developing constructive Complex Analysis (cf. \cite{BBY-riemann,BRY-car}). Computable analysis is based on the notion of {\it approximability}. For instance, a domain can be approximated by a nested sequence of interior polygonal approximations, which can be used to approximate its Riemann mapping by piecewise-linear maps (see e.g. \cite{hertling}) -- both in theory, and in computational practice. The harmonic measure of a domain can be approximated by a weakly converging sequence of finitely supported measures, which can be computed given an approximation of the domain (cf. \cite{BBRY}); and so on.

This point of view leads naturally to consider the relationships between various notions of convergence used in Complex Analysis, which was our starting point. It has led us to realize that various standard notions of convergence used for simply-connected domains (i.e. the convergence of Riemann maps, Green's functions, harmonic measures, etc) are all equivalent. This is seemingly a new observation, and its formulation and proof constitute the first part of this note.

The next natural step was to see how these notions of convergence disagree in the case of general non-simply connected domains; and what conditions can be imposed on such domains to reconcile them. This discussion forms the second half of this note.

We hope that our observations will be of an independent interest. In fact, it is surprising to us that they have apparently not been made before, since they concern some of the basic notions of geometric Complex Analysis.

\subsection{Basic properties of harmonic measure}
A detailed discussion of harmonic measure can be found in \cite{Garnett-Marshall}. Here we briefly recall some of the relevant facts.

A domain $\Omega$ in $\hat\CC$ is called {\it hyperbolic} if its complement $K=\hat\CC\setminus\Omega$ contains at least two points.
Let $\Omega$ be a simply-connected hyperbolic domain.
 The {\it harmonic measure} of $\Omega$ at $w$, denoted
$\omega_\Omega(w,\cdot)$, is defined on the boundary $\partial \Omega$. For a set $E\subset \partial \Omega$ its harmonic measure $\omega_\Omega(w,E)$ is equal to the probability that
a Brownian path originating at $w$ will first hit $\partial \Omega$ within the set $E$.
The support
$$\text{supp}\omega_\Omega(w,\cdot)=\partial\Omega.$$

If, for example, the boundary is locally connected, then the conformal Riemann mapping
$$f:\Omega\to\DD,\;f(w)=0,\;f'(w)>0$$
continuously extends to the unit circle by Carath\'eodory Theorem. By conformal invariance of Brownian motion, the
harmonic measure $\omega_\Omega(w,\cdot)$ is the pull-back $f^*\lambda$ of the harmonic measure of the unit disk at $0$; by symmetry, $\lambda$ is the Lebesgue measure on the unit circle.

To define the harmonic measure for a connected, but non simply-connected hyperbolic domain $\Omega=\hat\CC\setminus K$ we have to require that  logarithmic capacity $\text{Cap}(K)$ is positive: this ensures that a Brownian path originating in $\Omega$
will hit $\partial \Omega$ almost surely. The harmonic measure of a set $E\subset\partial\Omega$ is then again the probability of a Brownian path originating at $w$ to hit the boundary $\partial\Omega$ inside $E$.
In this case, the support of the harmonic measure is
$$\supp\omega_\Omega(w,\cdot)=\text{Reg}(\Omega),$$
where $\text{Reg}(\Omega)$ is the closure of the set of regular points of the boundary of $\Omega$.
Let $\Omega^*$ be the connected component of $\hat\CC\setminus \text{Reg}(\Omega)$ which contains $\Omega$: we will call this domain the {\it regularization} of $\Omega$. For any choice of $w\in\Omega$, the domains $\Omega$ and $\Omega^*$ possess the same harmonic measures. We will say that a domain $\Omega$ is {\it regular} if
$$\Omega=\Omega^*.$$

The proof of the following classical result can be found in \cite{Garnett-Marshall}:
\begin{bpt}\label{thm:bpt}
Let $K$ be a closed subset of $\bar\DD\setminus\{ 0\}$, and $K^*\equiv \{|w|\;:\; w\in K\}$ is the circular projection of $K$, then for every $z\in\DD\setminus K$
$$\omega_{\DD\setminus K}(z,K)\geq \omega_{\DD\setminus K^*}(-|z|,K^*).$$
\end{bpt}

\begin{defn}
\label{defn:unifperf}
We recall that a compact set $K\subset\riem$ which contains at least two points is {\it uniformly perfect} if the moduli of the ring domains
separating $K$ are bounded from above. Equivalently, there exists some $C>0$ such that for any
$x\in K$ and $r>0$, we have
$$\left(D(x,Cr)\setminus D(x,r)\right)\cap K=\emptyset\implies K\subset D(x,r).$$ In particular, every connected set is uniformly perfect.

\end{defn}

\noindent
Uniform perfectness for the planar sets $K$ implies the following (see  Theorem 1, \cite{pom79}):

\begin{prop}
\label{first hit}
There exists a constant $\nu=\nu(C)$ (with $C$ as in Definition~\ref{defn:unifperf})
such that for any $\eta>0$ the following holds. Let $y\in \Omega$ be a point such that $\dist(y,\partial\Omega)\le \eta/2$, and
let $B^y$ be a Brownian Motion started at $y$. Let $$T^y:=\min\{t~:~B^y_t\in \partial \Omega\}$$ be the first time
$B^y$ hits the boundary of $\Omega$. Then
\beq
\label{eq:cap}
\PP[|B^y_{T_y}-y|\geq \eta]<\nu.\eeq
\end{prop}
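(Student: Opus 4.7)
The plan is to reduce, via conformal invariance of Brownian motion, to a harmonic-measure estimate on the unit disk, and then invoke the Beurling Projection Theorem. The key first observation is that the event $\{|B^y_{T^y} - y| \ge \eta\}$ forces $B^y$ to exit $D(y,\eta)$ before hitting $K := \hat\CC \setminus \Omega$, since on this event the trajectory travels Euclidean distance at least $\eta$ without touching $\partial\Omega$. Pulling back by $\phi(w) = (w-y)/\eta$ and using conformal invariance of Brownian motion, the probability in question is bounded above by $1 - \omega_{\DD\setminus \tilde K}(0,\tilde K)$, where $\tilde K := \phi(K \cap \bar D(y,\eta)) \subset \bar\DD \setminus \{0\}$. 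It therefore suffices to produce $\delta = \delta(C) > 0$ with $\omega_{\DD\setminus \tilde K}(0,\tilde K) \ge \delta$ and take $\nu = 1 - \delta$.

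To extract structure from $\tilde K$, pick $x \in \partial\Omega$ realizing $\dist(y,\partial\Omega)$, so that $|x-y| \le \eta/2$. If $K \subset D(x,\eta/(2C))$, then in fact $K \subset D(y,\eta)$ and the probability is zero. Otherwise, since the nested disks $D(x, \eta C^{-k}/2)$ shrink with $k$, the set $K$ is not contained in any of them; applying Definition~\ref{defn:unifperf} at $x$ with $r = r_k := \eta C^{-k}/2$ yields, for each $k \ge 1$, a point $p_k \in K$ with $r_k \le |p_k - x| \le C r_k$. Each $p_k$ is itself a point of the uniformly perfect set $K$, and iterating the same dichotomy at each $p_k$ in turn produces a Cantor-like subset inside $K \cap \bar D(x,\eta/2)$, which under $\phi$ passes to a Cantor-like subset of $\tilde K$.

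The crucial quantitative consequence---which is the content of Theorem~1 of~\cite{pom79}, and is equivalent to the assertion that uniform perfectness is exactly a uniform capacity-density condition---is that the rescaled set $\tilde K$ has logarithmic capacity bounded below by some constant $c(C) > 0$. Granting this, the Beurling Projection Theorem stated above gives
\[
\omega_{\DD\setminus\tilde K}(0, \tilde K) \;\ge\; \omega_{\DD\setminus\tilde K^*}(0, \tilde K^*),
\]
and the right-hand side, a harmonic measure on the slit disk whose slit $\tilde K^* \subset (0,1]$ has capacity bounded below in terms of $c(C)$, is itself bounded below by a constant depending only on $C$ by standard harmonic-measure estimates for radial slits. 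The main obstacle is precisely this capacity-density step: translating the nested-scale combinatorial consequence of Definition~\ref{defn:unifperf} into a quantitative lower bound on the capacity of $\tilde K$ is the substantive content of \cite{pom79}, whereas the Beurling reduction and the conformal-invariance reduction are routine once this is in hand.
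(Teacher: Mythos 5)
The paper offers no proof of this proposition at all --- it is imported directly from Theorem~1 of \cite{pom79} --- so there is no in-paper argument to compare against line by line, and I will assess your outline on its own terms. Your two reductions are sound: the event $\{|B^y_{T^y}-y|\ge\eta\}$ does force the path to exit $D(y,\eta)$ before meeting $K$, and the substantive input really is Pommerenke's capacity-density characterization $\operatorname{cap}\bigl(K\cap\bar D(x,r)\bigr)\ge c(C)\,r$, which is exactly what the paper is citing. The genuine gap is your final step, where you convert that capacity lower bound into a harmonic-measure lower bound via the Beurling Projection Theorem.

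Circular projection does not preserve capacity from below, so the assertion that $\tilde K^*$ ``has capacity bounded below in terms of $c(C)$'' is false. Take $K$ to be the circle $\{|z-y|=\eta/2\}$, which is connected and hence uniformly perfect with an absolute constant: then $\tilde K$ is a circle centred at $0$, its circular projection $\tilde K^*$ is the single point $\{1/2\}$, which is polar, and the right-hand side of your Beurling inequality is $0$ --- the bound is vacuous precisely in a case where the true hitting probability is $1$. The same degeneration occurs for Cantor subsets of circles centred at $y$. Beurling projection gives nontrivial lower bounds only when the projection is known to contain a set of definite capacity (an interval, say, as for continua joining two circles), and uniform perfectness gives no such control over the radial shadow of $K$ as seen from $y$. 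The standard bridge from capacity density to the hitting estimate is potential-theoretic rather than projective: set $F=K\cap\bar D(x,\eta/4)$, so that $\operatorname{cap}(F)\ge c(C)\eta/4$ and $F\subset\bar D(y,3\eta/4)$, let $\mu$ be the equilibrium measure of $F$, and observe that the Green potential $w\mapsto\int g_{D(y,\eta)}(w,\zeta)\,d\mu(\zeta)$, divided by its supremum, is a minorant of $\omega_{D(y,\eta)\setminus F}(\cdot,F)$; its value at $y$ is at least $\log\frac43$, while its supremum is at most $\log(2\eta)+\sup U^\mu\le\log\frac{8}{c(C)}$ by Frostman's theorem and the maximum principle for potentials. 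This yields $\nu=1-\log\frac43\big/\log\frac{8}{c(C)}$ and repairs the argument; the rest of your outline can stand as written.
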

 In other words, there is at least a constant probability that the first point where $B^y$ hits the boundary
 is close to the starting point $y$.

\subsection{Weak convergence of measures}Let $\M(X)$ denote the set of Borel probability measures over a metric space $X$, which we will assume to be compact and separable. We recall the notion of
weak convergence of measures:

\begin{definition}
A sequence of measures $\mu _{n}\in \M(X)$ is said to be \defin{weakly convergent} to $\mu\in \M(X) $ if $\int f d\mu_{n}\rightarrow \int f d\mu$ for each $f\in C_{0}(X)$.
\end{definition}

Any smaller family of functions characterizing the weak convergence is called \defin{sufficient}.  It is well-known, that when  $X$ is a compact separable and complete metric space, then so is $\M(X)$. In this case, weak convergence on $\M(X)$ is compatible with the notion of \defin{Wasserstein-Kantorovich distance}, defined by:

\begin{equation*}
W_{1}(\mu,\nu)=\underset{f\in 1\text{-Lip}(X)}{\sup}\left|\int f d\mu-\int f d\nu\right|
\end{equation*}
\noindent where $1\mbox{-Lip}(X)$ is the space of Lipschitz functions on $X$, having Lipschitz constant less than one.

\section{Caratheodory convergence for simply connected domains}

\subsection{Classical definitions}
Let  $\Omega$ be  a simply connected domain with a marked point  $w \in \Omega$, and let $\Omega_{n}$ be a sequence of simply connected domains with marked points $w_n\in\Omega_n$. The classical notion of Caratheodory convergence is defined as follows.

\begin{definition}\label{s.c. convergence} Let $w\in \Omega$. We say that $(\Omega_{n},w_n)$ converges to $(\Omega,w)$  in  the \defin{Carath{\'e}odory sense} if the following holds:
\begin{itemize}
\item $w_n\to w$;
\item for any compact $K \subset \Omega$ and all $n$ large enough, $K \subset \Omega_{n}$;
\item for any open connected set $U\ni w$, if $U\subset \Omega_{n}$ for infinitely many $n$, then $U\subset \Omega$.
\end{itemize}
\end{definition}
The classical result of Carath{\'e}odory \cite{Car12,Pom75} states:
\begin{carker} Let $(\Omega_n,w_n)$, and $(\Omega,w)$ be as above. Consider the conformal Riemann parametrizations $\phi_n: (\DD,0)\mapsto(\Omega_n, w_n)$ with $\phi_n'(0)>0$ and $\phi: (\DD,0)\mapsto(\Omega, w)$ with $\phi'(0)>0$. Then
 the Carath{\'e}odory convergence of $(\Omega_n,w_n)$ to $(\Omega,w)$  is equivalent to the uniform convergence of $\phi_n$ to $\phi$ on compact subsets of $\DD$.
\end{carker}

\noindent
It is worth noting, that a similar statement holds for the Riemann mappings $f_n\equiv \phi_n^{-1}$:
\begin{prop}
\label{prop:fconv}
Let $f_n=\phi_n^{-1}$ and $f\equiv \phi^{-1}$. The domains $(\Omega_n,w_n)\to(\Omega,w)$ in the Carath\'eodory sense  if and only if for every compact subset $K\Subset \Omega$ the following hold:
\begin{itemize}
\item for all $n$ large enough, $K\Subset \Omega_n$;
\item $f_n\rightrightarrows f$ on $K$.
\end{itemize}
\end{prop}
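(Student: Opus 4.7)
The strategy is to reduce the statement to the Carath\'eodory Kernel Theorem, which already establishes the equivalence of Carath\'eodory convergence with uniform convergence of $\phi_n\to\phi$ on compact subsets of $\DD$. All that remains is a standard Rouch\'e-type transfer between uniform convergence of holomorphic univalent maps and uniform convergence of their inverses, run in both directions.

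For the forward direction, suppose $(\Omega_n,w_n)\to(\Omega,w)$ in the Carath\'eodory sense. The first bullet of the Proposition is the second bullet of Definition~\ref{s.c. convergence}. For the second bullet, fix $K\Subset\Omega$, set $L=f(K)\Subset\DD$, and choose a thickening $L'\Subset\DD$ of $L$. By the Kernel Theorem, $\phi_n\rightrightarrows\phi$ on $L'$. Choose $\rho>0$ so that $\overline{D(f(z),\rho)}\subset L'$ for every $z\in K$. Since $\phi$ is univalent, the continuous function $(z,\zeta)\mapsto|\phi(\zeta)-z|$ has no zeros on the compact set $\{(z,\zeta):z\in K,\ \zeta\in\partial D(f(z),\rho)\}$, and therefore admits a uniform positive lower bound $m>0$. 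For $n$ large enough, $\|\phi_n-\phi\|_{L'}<m/2$, so Rouch\'e applied to $\phi_n(\cdot)-z$ and $\phi(\cdot)-z$ on $\partial D(f(z),\rho)$ produces a unique zero of $\phi_n(\zeta)-z$ in $D(f(z),\rho)$; by bijectivity of $\phi_n$ this zero is $f_n(z)$. Hence $|f_n(z)-f(z)|<\rho$ uniformly in $z\in K$, and $\rho$ was arbitrary.

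The backward direction is symmetric. Given the two bullets of the Proposition, the plan is to prove $\phi_n\rightrightarrows\phi$ on every $L\Subset\DD$, from which the Kernel Theorem will yield Carath\'eodory convergence (and $w_n=\phi_n(0)\to\phi(0)=w$ for free). Given $L\Subset\DD$, set $K=\phi(L)$ and thicken to $K'$ with $K\subset(K')^{\circ}\Subset\Omega$; by hypothesis $K'\subset\Omega_n$ eventually and $f_n\rightrightarrows f$ on $K'$. Pick $\rho>0$ with $\overline{D(\phi(\zeta),\rho)}\subset K'$ for every $\zeta\in L$. Exactly as before, univalence of $f$ forces a uniform lower bound $m>0$ for $|f(z)-\zeta|$ on the circles $\partial D(\phi(\zeta),\rho)$, and $\|f_n-f\|_{K'}<m/2$ for large $n$ allows Rouch\'e to produce a unique zero of $f_n(z)-\zeta$ in $D(\phi(\zeta),\rho)$, which must be $\phi_n(\zeta)$. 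So $|\phi_n(\zeta)-\phi(\zeta)|<\rho$ uniformly on $L$.

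The key technical point in both directions is the uniform positive lower bound $m>0$ on the minima of $|\phi(\zeta)-z|$ (respectively $|f(z)-\zeta|$) over the boundary circles, which is what permits a single application of Rouch\'e's theorem across the whole compact set simultaneously. This uniformity is a compactness argument leveraging the univalence of the limit map, and is the only nonautomatic ingredient in an otherwise routine reduction to the Kernel Theorem.
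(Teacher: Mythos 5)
Your proof is correct, and it takes a recognizably different (though related) route from the paper's. Both arguments ultimately rest on the argument principle and on the Carath\'eodory Kernel Theorem, but they deploy it differently. In the harder direction (the two bullets imply Carath\'eodory convergence), the paper uses the winding-number count only to establish a \emph{containment} $f_n(\Omega')\supset D_{m-1}$, hence $\phi_n(D_{m-1})\subset \Omega'$; it then gets equicontinuity of the $\phi_n$ from Cauchy estimates and extracts convergence by Arzel\`a--Ascoli (leaving implicit the identification of the subsequential limit with $\phi$, which requires a further small argument such as passing to the limit in $f_n\circ\phi_n=\mathrm{id}$). You instead run Rouch\'e on small disks $D(\phi(\zeta),\rho)$ with a compactness-derived uniform lower bound $m>0$, which yields the quantitative estimate $|\phi_n(\zeta)-\phi(\zeta)|<\rho$ directly; this bypasses the normal-families step entirely and in particular makes the limit identification automatic. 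In the easier direction the paper again appeals to equicontinuity of the bounded family $f_n$ (with the same implicit limit-identification issue), whereas you reuse the same symmetric Rouch\'e transfer starting from $\phi_n\rightrightarrows\phi$ on compacts of $\DD$. Your version is slightly longer but more self-contained and fully quantitative; the paper's is shorter at the cost of leaning on soft compactness arguments. One minor point worth stating explicitly in your write-up: the $z_0$ produced by Rouch\'e in the backward direction lies in $D(\phi(\zeta),\rho)\subset K'\subset\Omega_n$, which is what licenses the identification $z_0=\phi_n(\zeta)$ via bijectivity of $f_n:\Omega_n\to\DD$.
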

\begin{proof}
To prove the "if" direction, let $D_m=D(0,1-1/m)$, and let $K_m=\phi(D_m)$. Let $\gamma$ be a boundary of a Jordan subdomain $\Omega'$ with $K_{m-1}\Subset\Omega'\Subset K _m$. Then the winding number of $f(\gamma)$ around every $z\in D_{m-1}$ is $1$. By the uniform convergence on $\gamma$, the same is true for $n\geq n_0$ large enough. Thus for these $n$, $f_n(K_m)\supset f_n(\Omega')\supset D_{m-1}$. By Cauchy Theorem, for such $n$, the maps $\phi_n$ are equicontinuous on $D_{m-1}$. By  Arzel{\`a}-Ascoli theorem, it follows that $\phi_n\rightrightarrows \phi$ on $D_{m-1}$. Since every compact in $\DD$ is a subset of $D_m$ for some $m$, the claim follows.

The other direction is easier: $(i)$ is the part of the Definition \ref{s.c. convergence}, while $(ii)$ again follows from equicontinuity of bounded family $f_n$ on a compact $K$.
\end{proof}

\subsection{A new take on Carath\'eodory convergence}
In this section, we give two more characterizations of Carath\'eodory convergence. The first one is purely geometric, and is strongly motivated by ideas of Computable Analysis applied to conformal mapping (cf. \cite{hertling}). Let us recall that the spherical metric on the Riemann sphere is given by
$$ds=\frac{2dz}{1+|z|^2};$$
it corresponds to the Euclidean metric on $\CC$ under the stereographic projection.

\begin{definition}\label{def:interior}Let $\Omega_n,\ n\in\NN$ be a sequence of domains in  $\hat \CC$, and $w_n\in\Omega_n$. Let also $\Omega$ be a domain containing $w$. We say that  the sequence $(\Omega_{n}, w_n)$,  and  $(\Omega, w)$ have \defin{arbitrarily good common interior approximations} if $w_n\to w$ and for every $ \eps > 0$, there exists $N \in \N$ and a closed connected set $K_{\eps} \subset\Omega \cap \bigcap_{n\geq N}\Omega_{n}$ containing $w$ such that
$$
\dist(x,\partial \Omega)<\eps \quad \text{ and } \quad \dist(x, \partial\Omega_n)<\eps
$$
holds  for all $x\in \partial K_{\eps}$ and all $n\geq N$, where the distance is taken in the spherical metric.
\end{definition}

\noindent
Let us  make the following observation:
\begin{lemma}\label{lem:independence}
Let $w'\in \Omega$ and let ${w'}_n\in\Omega_n$ with ${w'}_n\to w'$. Then the convergence in the sense of the Definition \ref{def:interior}
$$(\Omega_n,w_n)\to (\Omega,w)\text{ is equivalent to }(\Omega_n,{w'}_n)\to (\Omega,w').$$
\end{lemma}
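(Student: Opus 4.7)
The plan is to prove one direction (the converse then follows by swapping the roles of $w$ and $w'$, since the hypothesis and conclusion are symmetric). Suppose $(\Omega_n,w_n)\to(\Omega,w)$ in the sense of Definition~\ref{def:interior}. The key observation is that, for all sufficiently small $\eps$, the very set $K_\eps$ furnished by the definition already contains $w'$, so it can serve as the witness required to check the convergence based at $w'$.

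To realize this, I would first choose a compact arc $\gamma\subset\Omega$ joining $w$ to $w'$, which exists since $\Omega$ is open and connected. Let $\de=\dist(\gamma,\partial\Omega)>0$ in the spherical metric. Given $\eps>0$, set $\eps':=\min(\eps,\de/2)$ and invoke the hypothesis at level $\eps'$ to produce $N$ and a closed connected set $K_{\eps'}\subset\Omega\cap\bigcap_{n\geq N}\Omega_n$ containing $w$ whose boundary lies within $\eps'$ of both $\partial\Omega$ and $\partial\Omega_n$ for $n\geq N$.

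The main step is to show $\gamma\subset K_{\eps'}$. Since every point of $\partial K_{\eps'}$ is within $\eps'<\de$ of $\partial\Omega$, while every point of $\gamma$ is at distance $\geq\de$ from $\partial\Omega$, we have $\gamma\cap\partial K_{\eps'}=\emptyset$. Consequently $\gamma\cap K_{\eps'}=\gamma\cap\mathrm{int}(K_{\eps'})$ is closed in $\gamma$ (as $K_{\eps'}$ is closed) and open in $\gamma$ (as $\mathrm{int}(K_{\eps'})$ is open). Being clopen, nonempty (it contains $w$), and $\gamma$ being connected, we conclude $\gamma\cap K_{\eps'}=\gamma$. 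In particular $w'\in K_{\eps'}$. Together with the assumed convergence $w'_n\to w'$ and the fact that $\eps'\leq\eps$, this verifies Definition~\ref{def:interior} for $(\Omega_n,w'_n)\to(\Omega,w')$.

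The main (minor) obstacle is the topological step identifying $\gamma\cap K_{\eps'}$ as clopen in $\gamma$; this is where the hypothesis that $K_{\eps}$ is closed \emph{and} connected — and the coupling of the boundary to both $\partial\Omega$ and $\partial\Omega_n$ — comes into play. Everything else reduces to standard bookkeeping, and the reverse implication is proved by the same argument with $w$ and $w'$ exchanged.
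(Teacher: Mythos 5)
Your proof is correct and follows essentially the same route as the paper's: the paper also takes a curve $\gamma$ joining $w$ to $w'$, sets $\eps_0=\dist(\gamma,\partial\Omega)$, and observes that for $\eps<\eps_0$ any $\eps$-interior approximation containing one of the two points must contain the other. Your write-up merely makes explicit the clopen/connectedness argument that the paper leaves implicit.
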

\begin{proof}
Let $\gamma\subset \Omega$ be a simple closed curve joining $w$ to $w'$, and let $\eps_0$ be the distance from $\gamma$ to $\partial\Omega$. Then for all $\eps<\eps_0$ any $\eps$-interior approximation containing one of the points $w$, $w'$ would necessary contain the other one.
\end{proof}

\noindent
Our second characterization of Carath\'eodory convergence is quite concise:
\begin{definition}\label{def:measconv}
The sequence $(\Omega_n,w_n)$ converges to $(\Omega,w)$ \defin{in the sense of the harmonic measure} if
$$\omega_{\Omega_n}(w_n,\cdot)\to \omega_{\Omega}(w,\cdot)\text{ in the weak sense}.$$
\end{definition}

\begin{theorem}\label{thm:sc} For simply connected domains $\Omega_{n}\ni w_n$,  $n\in\NN$, and $\Omega\ni w$,  the following are equivalent:
\begin{itemize}
\item[(i)]$(\Omega_{n},w_n) \to (\Omega,w)$  in Carath\'eodory sense,
\item[(ii)] $(\Omega_{n}, w_n)$ and  $(\Omega, w)$ have arbitrarily good common interior approximations,
\item[(iii)] $(\Omega_n,w_n)\to(\Omega,w)$ in the sense of the harmonic measure.
\end{itemize}
\end{theorem}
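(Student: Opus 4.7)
My approach is to establish the cycle $(i)\Rightarrow(ii)\Rightarrow(iii)\Rightarrow(i)$, exploiting the Carath\'eodory Kernel Theorem throughout to switch between $(i)$ and the uniform convergence $\phi_n\to\phi$ on compact subsets of $\DD$.

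For $(i)\Rightarrow(ii)$ I take $K_\epsilon:=\phi(\overline{D(0,1-\delta)})$ for $\delta=\delta(\epsilon)$ to be chosen small. It is closed, connected, and contains $w=\phi(0)$; since compact subsets of $\Omega$ eventually lie in $\Omega_n$, $K_\epsilon\subset\Omega_n$ for $n$ large. The classical fact $\dist_{\hat\CC}(\phi(\zeta),\partial\Omega)\to 0$ uniformly as $|\zeta|\to 1$ handles the distance from $\partial K_\epsilon$ to $\partial\Omega$. For the analogous bound against $\partial\Omega_n$ I combine the Koebe distortion theorem with the boundedness of the conformal radii $|\phi_n'(0)|$ (the latter from Cauchy estimates on the uniformly convergent sequence) to obtain $\dist_{\hat\CC}(\phi_n(\zeta),\partial\Omega_n)\to 0$ uniformly in $n$ as $|\zeta|\to 1$; the triangle inequality with $\phi_n(\zeta)\to\phi(\zeta)$ on $|\zeta|=1-\delta$ then closes the step.

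For $(ii)\Rightarrow(iii)$ I abbreviate $K_m:=K_{\epsilon_m}$ with $\epsilon_m\downarrow 0$ and decompose, by the strong Markov property of Brownian motion,
\[
\omega_{\Omega_n}(w_n,\cdot)\;=\;\int_{\partial K_m}\omega_{\Omega_n}(y,\cdot)\,d\omega_{K_m}(w_n,y)
\]
for $n\geq N_m$, with an analogous formula in $\Omega$. The key estimate I need is $W_1(\omega_{\Omega_n}(y,\cdot),\delta_y)\leq\rho(\epsilon_m)\to 0$ uniformly for $y\in\partial K_m$. Because the complement of a simply connected hyperbolic domain in $\hat\CC$ is connected, hence uniformly perfect with a universal constant, Proposition~\ref{first hit} applies with a universal $\nu$; iterating it at the successive exit times of the nested balls $B(y,2^{k+1}\epsilon_m)$ upgrades the bound into a H\"older-type tail estimate $\omega_{\Omega_n}(y,\hat\CC\setminus B(y,R))\leq C(\epsilon_m/R)^\alpha$ for $R\geq 2\epsilon_m$ with a universal exponent $\alpha>0$, which integrates to the desired Wasserstein bound. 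Since $K_m$ is fixed and $w_n\to w$, $\omega_{K_m}(w_n,\cdot)\to\omega_{K_m}(w,\cdot)$ by continuity of harmonic measure in the basepoint; the triangle inequality plus a diagonal argument in $m$ completes the proof.

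For $(iii)\Rightarrow(i)$ the family $\{\phi_n\}$ is normal in the spherical metric (Montel, since each $\phi_n$ omits the continuum $\hat\CC\setminus\Omega_n$). Any subsequential limit $\Phi$ is either univalent or constant by Hurwitz; a constant limit $\Phi\equiv w$ would force $\omega_{\Omega_n}(w_n,\cdot)\to\delta_w$, contradicting the non-degenerate limit $\omega_\Omega(w,\cdot)$, so $\Phi$ is univalent and $\Omega^\dagger:=\Phi(\DD)$ is a simply connected domain through $w$. Applying the already proved chain $(i)\Rightarrow(ii)\Rightarrow(iii)$ to $\phi_{n_k}\to\Phi$ identifies the harmonic measure limit as $\omega_{\Omega^\dagger}(w,\cdot)$, which combined with the hypothesis gives $\omega_{\Omega^\dagger}(w,\cdot)=\omega_\Omega(w,\cdot)$; the resulting equality of supports $\partial\Omega^\dagger=\partial\Omega$ forces $\Omega^\dagger=\Omega$ (both are the component of $\hat\CC\setminus\partial\Omega$ containing $w$), and the normalization $\Phi'(0)>0$ fixes $\Phi=\phi$. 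The main technical hurdle is the quantitative estimate in $(ii)\Rightarrow(iii)$: Proposition~\ref{first hit} alone only gives a constant-probability bound, and it is the iteration into a H\"older tail estimate that actually drives the Wasserstein convergence.
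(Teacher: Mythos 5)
Your cycle $(i)\Rightarrow(ii)\Rightarrow(iii)\Rightarrow(i)$ is the same as the paper's, and your steps $(ii)\Rightarrow(iii)$ and $(iii)\Rightarrow(i)$ essentially reproduce its arguments: the paper likewise iterates Proposition~\ref{first hit} via the strong Markov property into a geometric tail bound $\PP(|B^y_{T_y}-y|\geq\eta)<\nu^n$ for $\dist(y,\partial\Omega)\le 2^{-n}\eta$, conditions on the exit position from $K_\eps$ to get the Wasserstein estimate, and in the last step uses normality plus the already-proved implication and the fact that the support of harmonic measure determines the domain (it rules out degeneration via the Beurling Projection Theorem rather than your ``constant limit forces $\delta_w$'' argument, but these are the same idea).

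The one step where you genuinely diverge, $(i)\Rightarrow(ii)$, contains a gap. The paper argues set-theoretically: it takes $K_\eps$ to be a component of a distance level set and observes that if some $x\in\partial K_\eps$ had $\dist(x,\partial\Omega_n)\ge\eps$ for infinitely many $n$, then $B(x,\eps)\cup \mathring{K}_\eps$ would violate the third (kernel maximality) clause of Definition~\ref{s.c. convergence}. You instead take $K_\eps=\phi(\overline{D(0,1-\delta)})$ and claim $\dist_{\hat\CC}(\phi_n(\zeta),\partial\Omega_n)\to 0$ \emph{uniformly in $n$} as $|\zeta|\to 1$, justified by ``Koebe distortion plus boundedness of the conformal radii $|\phi_n'(0)|$.'' That justification does not work: the distortion theorem gives $|\phi_n'(\zeta)|\le|\phi_n'(0)|(1+|\zeta|)(1-|\zeta|)^{-3}$, so the resulting upper bound $\dist(\phi_n(\zeta),\partial\Omega_n)\le(1-|\zeta|^2)|\phi_n'(\zeta)|$ is of order $(1-|\zeta|)^{-2}$ and blows up rather than decays. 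Moreover, bounded conformal radius alone cannot give your claim: the maps onto longer and longer rectangles $\{|\Im z|<1,\ |\Re z|<n\}$ have bounded conformal radius at $0$, yet for every $\delta$ there are $n$ and $|\zeta|=1-\delta$ with $\dist(\phi_n(\zeta),\partial\Omega_n)=1$. The estimate you need is true, but only because of the actual convergence $\phi_n\to\phi$: e.g.\ choose $\delta$ with $(1-|\zeta|^2)|\phi'(\zeta)|<\eps/2$ on $|\zeta|=1-\delta$ (Koebe applied to the \emph{limit} map, valid since $\Omega$ has finite spherical area), then use Weierstrass convergence $\phi_n'\to\phi'$ on that circle together with $\dist(\phi_n(\zeta),\partial\Omega_n)\le(1-|\zeta|^2)|\phi_n'(\zeta)|$. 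Either insert that repair or revert to the paper's direct kernel argument; as written, the step is not established.
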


\begin{proof}

(i) $\implies$ (ii).  Assume (i).  For all $\eps>0$ and $R>0$, let $K_{\eps}\subset \Omega$ be the closure of the connected component of the set
$$\left\{z\in\Omega\,:\,\dist(z, \partial \Omega)<\eps/4\right\}$$
containing $w$. Then $\dist(x, \partial \Omega)>\eps/2$ for all $x\in \partial K_{\eps}$.

Since $K_\eps$ is a compact subset of $\Omega$ (in $\hC$),  we know by (i) that there exists $N^*\in\NN$ such that $K_{\eps} \subset \bigcap_{n>N^{*}} \Omega_{n}$.  Let $x\in \partial K_{\eps}$ and suppose that there exists $n^*\geq N^*$ such   that $\dist(x, \partial  \Omega_{n^{*}})>\eps$.  This means that  the open set $B_{n^{*}}(x,\eps)\cup{\overset{\circ}{K}_{\eps}}$ contains $w$, is contained in $\Omega_{n^{*}}$, but is not contained in $\Omega$. By (i) again we know that there are at most finitely many such $n^{*}$, which shows (ii).\\

    (ii) $\implies$ (iii).  Let  $K_{\eps} \ni w$ be a common interior approximation. Note that such $K_{\eps}$ has non empty interior as long as $\eps<\dist(w_0,\partial \Omega)$, and hence contains $w_n$ for all large enough values of $n$. Let $T_w^{\eps}$ be the random variable defined as the first hitting time of $B_{t}$ on $\partial K_{\eps}$. Note that $$\dist(B_{T_w^{\eps}},\partial \Omega)<\eps.$$  Let $f$ be a 1-Lip function and let $M$ be a universal bound for the absolute value of $f$.  Since $\Omega$ is simply connected, it is, in particular, uniformly perfect.  Proposition~\ref{first hit} and the Strong Markov property of the Brownian motion imply that for any   $n$
    \begin{equation}\label{eq:hit}
    \PP(|B^y_{T_y}-y|\geq \eta)<\nu^n
    \end{equation}
    as long as $\dist(y,\partial\Omega)\le 2^{-n}\eta$. The rest of the proof of this implication follows from the following proposition.
    \begin{prop}\label{prop:convergence}
    Assume that for any $\delta>0$ there exists $\eps\in(0,\delta/10)$ such that
    $$\dist(y,\partial\Omega)<\eps\text{ implies  }\PP(|B^y_{T_y}-y|\geq \delta/10)<\delta/10M.$$
    Then
    $$
   W_1 [ \omega_{\overset{\circ}{K}_{\eps}}(w,\cdot),  \omega_{\Omega}(w,\cdot) ] < \delta/2.
   $$
    \end{prop}

\begin{proof}[Proof of the Proposition \ref{prop:convergence}]
    By using the strong Markov property of the Brownian motion again, we see that there exists $\eps < \delta/10$ for which the probability that $$|B_{T_w} - B_{T_w^{\eps}}| > \delta/10$$ is at most $\delta/10M$.

    We split the probabilities in the two complementary cases: one where $B_{T_w}$ stays $\delta/10$-close to $B_{T_w^\eps}$ and the complementary case. We have:
  \begin{align*}
    &| f(B_{T_w^\eps}) - \EE(f(B_{T_w} | B_{T_w^{\eps}}) |=  \\
		&|f(B_{T_w^\eps})-\EE[f(B_{T_w})|B_{T_w^\eps},\; |B_{T_w}-B_{T_w^\eps}|<\delta/10]\cdot \PP[|B_{T_w}-B_{T_w^\eps}|<\delta/10]+\\
		&|f(B_{T_w^\eps})-\EE[f(B_{T_w})|B_{T_w^\eps},\; |B_{T_w}-B_{T_w^\eps}|\geq \delta/10]\cdot \PP[|B_{T_w}-B_{T_w^\eps}|\geq \delta/10]<\\
		&\delta/10\cdot 1+M\cdot \delta/(10M)< \delta/2;
    \end{align*}
   which in turn implies
   $$
   |\EE_{B_{T_w^{\eps}}}(f(B_{T_w^{\eps}})) - \EE_{B_{T_w^{\eps}}}(f(B_{T_w}) | B_{T_w^{\eps}}) |  < \delta/2.
   $$
   Since $\EE_{B_{T_w^{\eps}}}(\EE(f(B_{T_w}) | B_{T_w^{\eps}})) = \EE(f(B_{T_w}))$, this implies
   $$
W_1 [ \omega_{\overset{\circ}{K}_{\eps}}(w,\cdot),  \omega_\Omega(w,\cdot) ] < \delta /2.
   $$
   \end{proof}
Since, by \eqref{eq:hit}, we can apply the proposition to both $\Omega$ and $\Omega_{n}$ with $n$ large enough, we also get
$$W_1 [ \omega_{\overset{\circ}{K}_{\eps}}(w,\cdot),  \omega_{\Omega}(w,\cdot) ] < \delta /2\text{ and }
W_1 [ \omega_{\overset{\circ}{K}_{\eps}}(w,\cdot),  \omega_{\Omega_n}(w_n,\cdot) ] < \delta /2.
   $$
We conclude that  $$W_1[\omega_{\Omega_n}(w_n,\cdot), \omega_\Omega(w,\cdot)] <\delta,$$ which proves (iii).

    (iii) $\implies$ (i). Assume  $$\omega_{n}\equiv\omega_{\Omega_n}(w_n,\cdot)\to \omega\equiv\omega_\Omega(w,\cdot)\text{ weakly}.$$  Let $0<d<\dist(w,\partial\Omega)$, then $\omega(D(w,d))=0$. Thus for any $\eps>0$ one can find $n$ large enough so that $\omega_n(D(w,d))<\eps$. Hence, by Beurling Projection Theorem, $D(w,d/2)\subset \Omega_n$, if $\eps$ is small enough.
		Thus the conformal maps $f_n=\phi_n^{-1}$ form a normal family. For any converging subsequence $(f_{n_k})\to g$ of $(f_n)$, the corresponding harmonic measures $\omega_{n_k}$ must converge weakly to the harmonic measure of the limiting domain $g(\DD)$, by the already established implication (i)$\implies$(iii). Thus, by our assumption, the harmonic measure on $g(\DD)$ is $\omega$. Since the harmonic measure is supported on the boundary of a domain, it determines the domain. Thus, $g=f$, and, by normality, $f_n$ converges to $f$ uniformly on compacts. Proposition~\ref{prop:fconv} completes the proof.
    \end{proof}

\noindent
Let us observe:
\begin{corollary}\label{cor:independence}
Let $w'\in \Omega$ and let ${w'}_n\in\Omega_n$ with ${w'}_n\to w'$. Then Carath\'eodory convergence
$$(\Omega_n,w_n)\to (\Omega,w)\text{ is equivalent to }(\Omega_n,{w'}_n)\to (\Omega,w').$$
\end{corollary}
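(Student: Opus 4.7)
The plan is to deduce this corollary as an immediate consequence of \thmref{thm:sc} combined with \lemref{lem:independence}. Indeed, \thmref{thm:sc} establishes that Carath\'eodory convergence (i) is equivalent to the existence of arbitrarily good common interior approximations (ii), and \lemref{lem:independence} asserts precisely the independence of condition (ii) on the choice of marked basepoint, provided the new basepoints lie in $\Omega$ and converge to a point of $\Omega$.

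Concretely, I would proceed as follows. Assume $(\Omega_n,w_n)\to(\Omega,w)$ in the Carath\'eodory sense. By the implication (i)$\Rightarrow$(ii) of \thmref{thm:sc}, the pairs $(\Omega_n,w_n)$ and $(\Omega,w)$ admit arbitrarily good common interior approximations in the sense of \defin{Definition \ref{def:interior}}. Applying \lemref{lem:independence} with the new basepoints $w'\in\Omega$ and $w'_n\in\Omega_n$ satisfying $w'_n\to w'$, we conclude that $(\Omega_n,w'_n)$ and $(\Omega,w')$ also have arbitrarily good common interior approximations. Invoking the implication (ii)$\Rightarrow$(i) of \thmref{thm:sc}, we obtain Carath\'eodory convergence $(\Omega_n,w'_n)\to(\Omega,w')$. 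The reverse implication is symmetric, since the hypotheses on $(w_n,w'_n)$ and on $(w,w')$ are interchangeable (both $w,w'\in\Omega$ and $w_n\to w$, $w'_n\to w'$).

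There is essentially no obstacle here: the work has already been done in \lemref{lem:independence}, whose short proof uses only that a curve $\gamma\subset\Omega$ joining $w$ to $w'$ is a compact subset of $\Omega$ at a positive distance from $\partial\Omega$, so that any sufficiently fine interior approximation must contain $\gamma$ (and hence both basepoints) as soon as it contains one of them. Thus the corollary is a formal consequence of the equivalence established in \thmref{thm:sc} together with the basepoint-independence of the geometric condition (ii).
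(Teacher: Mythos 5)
Your proposal is correct and is exactly the paper's argument: the paper likewise proves the corollary by noting that, via the equivalence (i)$\Leftrightarrow$(ii) of Theorem~\ref{thm:sc}, Carath\'eodory convergence can be replaced by the condition of Definition~\ref{def:interior}, whose basepoint-independence is precisely Lemma~\ref{lem:independence}. Your write-up just spells out the same two-step reduction in more detail.
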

\begin{proof}
If we use Definition~\ref{def:interior}, the corollary is just a restatement of Lemma \ref{lem:independence}.
\end{proof}


\section{Notions of convergence for arbitrary  domains in \texorpdfstring{$\hat\CC$}{{\bf C}}}
In this section, we consider sequences of arbitrary domains $\Omega_n\subset\hat\CC$ with marked points $w_n\in\Omega_n$. To simplify the discussion, we will always assume that these domains are regular.
This is the same as $\supp\omega_{\Omega_n}(w,\cdot)=\partial\Omega_n$ for any $w\in\Omega_n$.

Both of the definitions of convergence in the sense of harmonic measure (Definition \ref{def:measconv}) and of having arbitrarily good common interior approximations (Definition \ref{def:interior}) can be applied to this case without any changes. Yet, as we will see below, they are no longer equivalent.

First of all, the convergence in the sense of Definition \ref{def:interior} does not depend on the choice of the point $w\in\Omega$ and the corresponding points $w_n\in\Omega_n$, by Lemma \ref{lem:independence}.
In contrast,
 even for the sequences of regular planar domains, the notion of the convergence in  sense of harmonic measure depends on the choice of a point $w\in\Omega$:
\begin{example}
  \label{counterex1}
  There exists a sequence of planar domains $\Omega_n$ and a domain $\Omega$, two points $w_1,\ w_2\in\left(\cap_{n}\Omega_n\right)\cap\Omega$ such that the sequence $\omega_{\Omega_{n}}(w_1,\cdot)$ weakly converge to $\omega_{\Omega}(w_1,\cdot)$, but the sequence $\omega_{\Omega_{n}}(w_2,\cdot)$ does not weakly converge to $\omega_{\Omega}(w_2,\cdot)$.
\end{example}
\begin{proof}
  Namely, let us consider a $\Omega$ to be the unit disk $\DD$, $w_1=0$, $w_2=1/2$, and $$\Omega_n:=\DD\setminus \left\{1/2+r_ne^{i\theta},\ -\pi+r_n\leq\theta\leq\pi-r_n\right\},$$
where the sequence $r_n\in(0,1)$ is selected in such a way that $$\omega_{\Omega_n}(0,\left\{1/2+r_ne^{i\theta},\ -\pi+r_n\leq\theta\leq\pi-r_n\right\})<2^{-n}.$$ This can be done since when $r_n\to 0$, this harmonic measure tends to $0$. On the other hand,
$$\omega_{\Omega_n}(1/2,\left\{1/2+r_ne^{i\theta},\ -\pi+r_n\leq\theta\leq\pi-r_n\right\})\to 1\text{ as } n\to\infty.$$

Then  $\omega_{\Omega_n}(0,\cdot)$ converges to the normalized length on $S^1$, which is $\omega_{\DD}(0, \cdot)$, whereas $\omega_{\Omega_n}(1/2,S^1)\to 0$, so those measures do not converge to any measure supported on the boundary of $\DD$.
\end{proof}

We further have:
\begin{example}
  \label{counterex2}
There exist regular domains $\Omega_n$, $n\in\NN$, and two domains $\Omega\neq\Omega'$, $\partial\Omega\cap\partial\Omega'=\emptyset$ such that
$(\Omega_n,0)$ and $(\Omega',0)$ have arbitrarily good common interior approximations, but
$$\text{For any } w\in\Omega,\ \omega_{\Omega_n}(w,\cdot)\xrightarrow{w^*}\omega_{\Omega}(w,\cdot). $$
\end{example}
\begin{proof}
The domains $\Omega_n$ will be obtained from the unit disk by removing $2^n$ very small radial intervals around the circle $\{|z|=1/2\}$. More specifically,
$\Omega_n=\DD\setminus K_n$, where
$$K_n:=\bigcup_{k=1}^{2^n}\left[\left(1/2+2^{-n}-r_n\right)\exp\left(i\pi2k2^{-n}\right),\, \left(1/2+2^{-n}+r_n\right)\exp\left(i\pi2k2^{-n}\right)\right]$$
where $r_n$ are chosen to be so small that
$$\left\{z\,:\, \omega_{\Omega_n}(z, K_n)>2^{-n}\right\}\subset\left\{1/2+2^{-n-1}<|z|<1/2+2^{-n+1}\right\}=:A_n.$$

Let $\Omega=\DD$, $\Omega'=\frac12\DD$.

$(\Omega_n, 0)$ and $(\Omega', 0)$ have arbitrarily good common interior approximations (given by $\left(\frac12-2^{n}\right)\DD$). On the other hand, for any $w\in\DD$ there exists $N$ such that $w\not\in A_n$ for $n>N$, so $\omega_{\Omega_n}(w, K_n)\le2^{-n}$. This implies that $\omega_{\Omega_n}(w,\cdot)\xrightarrow{w^*}\omega_{\Omega}(w,\cdot).$
\end{proof}

Thus, the definitions \ref{def:interior} and \ref{def:measconv}  are indeed not generally comparable.
Nevertheless, we can formulate a condition for their equivalency.

\begin{theorem}\label{thm:planar}
Let $\Omega$ be a regular planar domain, and let $(\Omega_n)$ be a sequence of planar domains. Assume that $(\Omega_n)$ are \emph{uniformly regular}, i.e.  \begin{equation}\label{eq:uniform}\forall \delta>0\; \exists\eps>0\text{  such that } \dist(z,\partial\Omega_n)<\eps\implies\omega_{\Omega_n}(D(z, \delta))>1-\delta.
    \end{equation}
    Then the following are equivalent.
\begin{itemize}
\item[(i)] For any $w\in \Omega$ and \emph{any} sequence $w_n\in\Omega_n$ converging to $w$, the sequence $(\Omega_n, w_n)$ converges to $(\Omega, w)$ in the sense of harmonic measure.
\item[(ii)] For any $w\in \Omega$ and \emph{some} sequence $w_n\in\Omega_n$ converging to $w$, the sequence $(\Omega_n, w_n)$ converges to $(\Omega, w)$ in the sense of harmonic measure.
\item[(iii)] For some $w\in\Omega$ and some $w_n\in\Omega_n$ converging to $w$, the sequence $(\Omega_n, w_n)$ converges to $(\Omega, w)$ in the sense of Definition \ref{def:interior}.
\end{itemize}
\end{theorem}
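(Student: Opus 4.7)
The implication $(i)\Rightarrow(ii)$ is immediate by specializing the sequence in (ii) to any convergent choice. The substantive content lies in the two directions involving Definition \ref{def:interior}.

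For $(iii)\Rightarrow(i)$, I would adapt the argument used for $(ii)\Rightarrow(iii)$ of Theorem~\ref{thm:sc}. Fix $\delta>0$ and, via (iii), pick a common interior approximation $K_\eps$ at scale $\eps$ sufficiently small in terms of $\delta$. The strong Markov property at the first hitting time on $\partial K_\eps$ decomposes both $\omega_\Omega(w,\cdot)$ and $\omega_{\Omega_n}(w_n,\cdot)$ via the first-exit distributions on the fixed domain $K_\eps^\circ$, which are weakly close because $w_n\to w$ on a fixed regular domain. For a starting point $y\in\partial K_\eps$, Proposition~\ref{prop:convergence} applies to $\Omega_n$: its hypothesis is exactly the uniform regularity condition \eref{eq:uniform} (noting $\dist(y,\partial\Omega_n)<\eps$ from the definition of common interior approximation), so the continuation of the Brownian path stays within $\delta/10$ of $y$ with probability $>1-\delta/(10M)$. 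The matching estimate for $\Omega$ itself is the principal auxiliary point; I would establish as a separate lemma that, under (iii), the limit $\Omega$ inherits uniform regularity from $(\Omega_n)$. Given $y\in\partial K_\eps$ close to $\partial\Omega$, I would refine to a finer common interior approximation $K_{\eps'}$ with $\eps'\ll\eps$ and apply the maximum principle to the harmonic function $u(x)=\omega_\Omega(x,\partial\Omega\setminus D(y,\delta))$ to dominate $u(y)$ by $\omega_{\Omega_n}(y,\partial\Omega_n\setminus D(y,\delta/2))$ plus a vanishing error supported in the thin shell between $\partial K_{\eps'}$ and $\partial\Omega$. The first term is small by uniform regularity of $\Omega_n$, and the error is small by choosing $\eps'$ small.

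For $(ii)\Rightarrow(iii)$, I would realize the common interior approximations as sublevel sets of the Green's function. Weak convergence of harmonic measures together with the Riesz potential representation
\[
g_\Omega(w,z)=\int\log\frac{1}{|z-\zeta|}\,d\omega_\Omega(w,\zeta)+h_\Omega(w,z),
\]
with $h_\Omega$ harmonic and uniformly bounded on compacta away from $w$, yields uniform convergence $g_{\Omega_n}(w_n,\cdot)\to g_\Omega(w,\cdot)$ on compact subsets of $\Omega\setminus\{w\}$. Uniform regularity \eref{eq:uniform} of $(\Omega_n)$ provides a Beurling-type H\"older estimate for $g_{\Omega_n}$ up to $\partial\Omega_n$, with constants independent of $n$, so that each level set $\{g_{\Omega_n}(w_n,\cdot)\ge t\}$ has its boundary within $O(t^\alpha)$ of $\partial\Omega_n$. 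Choosing $K_\eps=\{g_\Omega(w,\cdot)\ge t_\eps\}$ for $t_\eps$ suitably small, connectedness is automatic (its complement is a neighborhood of $\partial\Omega$), and the uniform convergence combined with the H\"older estimate forces the distance condition of Definition~\ref{def:interior} with respect to both $\partial\Omega$ and $\partial\Omega_n$ for all $n\ge N(\eps)$.

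The main obstacle will be the direction $(ii)\Rightarrow(iii)$, since passing from weak convergence of measures on possibly wild boundaries to a genuinely geometric statement about interior approximations demands extracting quantitative boundary regularity from \eref{eq:uniform}. Example~\ref{counterex2} confirms that uniform regularity cannot be dropped here. A secondary technical point is the transfer of uniform regularity from $(\Omega_n)$ to the limit $\Omega$ in $(iii)\Rightarrow(i)$; I expect to handle it through nested common interior approximations and the maximum principle, essentially by comparing Brownian exits in $\Omega$ and $\Omega_n$ within a thin shell around their shared interior.
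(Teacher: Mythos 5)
Your treatment of $(i)\Rightarrow(ii)$ and of $(iii)\Rightarrow(i)$ is essentially the paper's route: Lemma~\ref{lem:independence} reduces to arbitrary marked points, and Proposition~\ref{prop:convergence} is applied to $\Omega_n$ (hypothesis supplied by \eqref{eq:uniform}) and to $\Omega$ (the paper simply invokes the standing assumption that $\Omega$ is regular rather than deriving an inherited uniform regularity as you propose; your extra lemma is not needed, though your explicit remark that $\omega_{K_\eps^\circ}(w_n,\cdot)\to\omega_{K_\eps^\circ}(w,\cdot)$ on the fixed domain is a point the paper glosses over).

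The direction $(ii)\Rightarrow(iii)$ is where your proposal has a genuine gap. Your construction $K_\eps=\{g_\Omega(w,\cdot)\ge t_\eps\}$ requires, for all large $n$, that every $x$ with $g_{\Omega_n}(w_n,x)\le 2t_\eps$ satisfy $\dist(x,\partial\Omega_n)<\eps$, i.e.\ a \emph{lower} bound $g_{\Omega_n}(w_n,x)\ge c\bigl(\dist(x,\partial\Omega_n)\bigr)$ uniform in $n$. The uniform regularity condition \eqref{eq:uniform} gives only the opposite inequality (an upper bound on $g_{\Omega_n}$ near $\partial\Omega_n$, via the exit-position estimate); it says nothing about points far from the boundary. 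A sequence of uniformly regular domains containing longer and longer thin channels (or a fixed multiply connected domain with a narrow neck) has points at definite distance from the boundary where the Green's function from $w_n$ is arbitrarily small, so the level set $\{g_{\Omega_n}(w_n,\cdot)=t\}$ need not lie near $\partial\Omega_n$ for any fixed $t$, and no ``Beurling-type H\"older estimate'' of the kind you invoke follows from \eqref{eq:uniform}. A secondary circularity: to get uniform convergence $g_{\Omega_n}(w_n,\cdot)\to g_\Omega(w,\cdot)$ on compacta you must first know that compact subsets of $\Omega$ are eventually contained in $\Omega_n$, which is already most of the geometric content of (iii); this can be extracted from (ii) applied at every point of $\Omega$ together with \eqref{eq:uniform}, but you do not address it. The paper avoids all of this by arguing indirectly: it proves a compactness statement (Proposition~\ref{prop:interior_compact}, built from finite coverings by disks of radius comparable to the distance to the boundary and a diagonal extraction) showing every subsequence has a further subsequence converging in the sense of Definition~\ref{def:interior} to some regular $(\Omega',w)$; then the already established $(iii)\Rightarrow(i)$ plus the fact that the harmonic measure of a regular domain determines the domain forces $\Omega'=\Omega$. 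If you want to salvage a direct construction, you should replace Green's level sets by components of $\{z:\dist(z,\partial\Omega)>\eps/2\}$, but you will still need an argument (not a pointwise potential-theoretic estimate) to show these sets eventually sit inside $\Omega_n$ with boundary close to $\partial\Omega_n$, and the natural such argument is the paper's compactness-plus-uniqueness scheme.
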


Let us observe that by Proposition \ref{first hit} and \eqref{eq:hit}, a sequence of uniformly perfect domains (with the same constant of uniform perfectness) automatically satisfy this condition.
\begin{proof}
The implication $ (i)\implies (ii)$ is trivial.

 To prove the implication $(iii)\implies (i)$, let us first observe that the convergence in the sense of Definition \ref{def:interior} does not depend on the choice of the point $w\in\Omega$ and the sequence $w_n\in\Omega_n$, $w_n\to w$, by Lemma \ref{lem:independence}. $(i)$ is thus the direct consequence of Proposition \ref{prop:convergence} (and the fact that $\Omega$ itself is regular).

Let now, as in the proof of Theorem \ref{thm:sc},
 $\omega_{n}\equiv\omega_{\Omega_n}(w_n,\cdot)$,  $\omega\equiv\omega_\Omega(w,\cdot)$.

 To show $(ii)\implies (iii)$,   observe that if $r=\dist(w, \partial\Omega)$, then $$\lim_{n\to\infty}\omega_n(D(w,r/2))\leq \omega(D(w,r))=0.$$ Thus, by \eqref{eq:uniform}, one can find $r_0<r/4$ such that for large enough $n$, $$\dist(w,\partial\Omega_n)>2r_0.$$

 To finish the proof of the implication, let us show that
 \begin{prop}\label{prop:interior_compact}
   Every subsequence $(\Omega_{n_k})$ of $(\Omega_n, w_n)$ contains a subsequence converging in the sense of Definition \ref{def:interior} to some regular domain with the marked point $(\Omega', w)$.
 \end{prop}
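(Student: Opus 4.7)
The plan is to use Hausdorff compactness on the complements $\hC \setminus \Omega_{n_k}$ to extract a convergent sub-subsequence, build $\Omega'$ as the connected component of $w$ in the complement of the Hausdorff limit, and then verify both the common interior approximation condition and the regularity of $\Omega'$.

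First, applying Blaschke's selection theorem to the closed subsets $F_k := \hC \setminus \Omega_{n_k}$ of the compact space $\hC$ (in the spherical metric), one passes to a sub-subsequence --- still denoted $(\Omega_{n_k})$ --- along which $F_k$ converges in the Hausdorff metric to some compact set $F \subset \hC$. The uniform bound $\dist(w_n, \partial\Omega_n) > 2r_0$ established just above transfers via Hausdorff convergence to $\dist(w, F) \geq 2r_0$, so $w \notin F$. Define $\Omega'$ to be the connected component of $\hC \setminus F$ containing $w$; this is an open connected set marked at $w$.

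Next, the common interior approximations are obtained from inner parallel bodies of $\Omega'$. Fix $\eps > 0$ and let $K_\eps$ be the closure of the connected component containing $w$ of the open set $\{z \in \Omega' : \dist(z, \partial\Omega') > \eps/2\}$. The key geometric observation is that for any $z \in \Omega'$, the spherical geodesic from $z$ to its nearest point of $F$ must first exit $\Omega'$, and therefore $\dist(z, F) = \dist(z, \partial\Omega')$ throughout $\Omega'$. Once $k$ is large enough that the Hausdorff distance between $F_k$ and $F$ is below $\eps/4$, every $z \in K_\eps$ satisfies $\dist(z, F_k) \geq \eps/4 > 0$, hence $K_\eps \subset \Omega_{n_k}$. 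For $x \in \partial K_\eps$ one has $\dist(x, \partial\Omega') = \eps/2 < \eps$, together with $\dist(x, \partial\Omega_{n_k}) \leq \dist(x, F_k) \leq \eps/2 + \eps/4 < \eps$. Thus $K_\eps$ is a common interior approximation to order $\eps$.

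The main obstacle is the final step: verifying that $\Omega'$ is regular, i.e.\ $\Omega' = (\Omega')^*$. Given $\zeta \in \partial\Omega'$, pick $\zeta_k \in F_k$ with $\zeta_k \to \zeta$; for $z \in \Omega'$ close to $\zeta$, $z$ lies in $\Omega_{n_k}$ for large $k$ and $\dist(z, \partial\Omega_{n_k}) \leq |z - \zeta_k|$ is small. The uniform regularity hypothesis \eqref{eq:uniform} then yields $\omega_{\Omega_{n_k}}(z, D(z, \delta)) > 1 - \delta$. One transfers this concentration to $\Omega'$ by coupling Brownian motions in the two domains, which agree until exit from a small ball lying inside a common interior approximation $K_{\eps'}$; the Hausdorff convergence $F_k \to F$ then controls the location of first hits of $\partial\Omega'$ in terms of first hits of $\partial\Omega_{n_k}$. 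This yields $\omega_{\Omega'}(z, D(\zeta, \delta')) \to 1$ as $z \to \zeta$ in $\Omega'$, which is the standard Brownian-motion criterion for regularity of the boundary point $\zeta$, so $\Omega' = (\Omega')^*$ is regular.
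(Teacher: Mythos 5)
Your route is genuinely different from the paper's. The paper builds the limit domain from the inside out: it covers the sphere by finite collections of disks at scales $2^{-l}r_0$, takes $K_l$ to be the maximal connected union of disks at scale $l$ whose slightly enlarged copies lie in infinitely many of the $\Omega_{n_k}$ (and contain $w$), extracts a diagonal subsequence, and sets $\Omega'=\bigcup K_l$. You instead compactify from the outside: Blaschke selection on the complements $F_k=\hC\setminus\Omega_{n_k}$ gives a Hausdorff limit $F$, and $\Omega'$ is the component of $\hC\setminus F$ containing $w$. Your construction is clean and in some respects more transparent: the identity $\dist(z,F)=\dist(z,\partial\Omega')$ on $\Omega'$ makes the verification that the inner parallel bodies $K_\eps$ are common interior approximations essentially immediate, whereas the paper's version would still need one to check that the increasing union of closed sets $\bigcup K_l$ is open and that the $K_l$ approximate $\partial\Omega'$ itself. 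The one place where your argument is only a sketch is the regularity of $\Omega'$: a single application of \eqref{eq:uniform} shows that a Brownian path started at $z$ near $\zeta\in\partial\Omega'$ hits $F_k$ at a nearby point with high probability, and hence (since $\dist(\cdot,F_k)$ and $\dist(\cdot,\partial\Omega')$ differ on $\Omega'$ by at most the Hausdorff distance between $F_k$ and $F$) either has already crossed $\partial\Omega'$ or has merely come close to $\partial\Omega'$ --- it need not yet have hit it. To close this you need the multiscale iteration that produces \eqref{eq:hit} from Proposition \ref{first hit}: apply \eqref{eq:uniform} at scales $\delta, \delta/2, \delta/4,\dots$, at each scale choosing $k$ with the Hausdorff distance small enough, so that the continuous path accumulates on $\partial\Omega'$ inside $D(\zeta,2\delta)$ with probability at least $\prod_j(1-\delta_j)$. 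This is a genuine step to fill in; in fairness, the paper's own proof dispatches it with the single sentence ``By the condition \eqref{eq:uniform}, $\Omega'$ is regular,'' so your sketch is no less complete than the original at this point, and the rest of your argument is correct.
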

 \begin{proof}[Proof of Proposition \ref{prop:interior_compact}]
 Let us select a finite collection of disks (in the spherical metric) $${\mathfrak D}_1:=(D(z_n, r_0/4))\text{ such that }\hC\subset\cup D(z_n, r_0/8)$$
 Let $K_1$ be the closure of the maximal connected union of elements of ${\mathfrak D}_1$ such that $r_0/4$-neighborhood of $K_1$, $D(z_n, r_0/2)$ is contained in the infinitely many domains from $(\Omega_{n_k})$ and such that $w\in K_1$.  For these domains, $K_1$ form an $r_0$-interior approximation. Also, since $\dist(w,\partial\Omega_{n_k})>2 r_0$ for large $k$, $K_1$ is not empty.

 Repeating this for some finite collections
  $${\mathfrak D}_l:=(D(z_n, 2^{-l-1}r_0))\text{ such that }\hC\subset\cup D(z_n, 2^{-l-2}r_0)$$
 and using the diagonal process, we
 obtain an increasing sequence $K_l$ and a sequence $n_{k_j}$,  such that $K_l$ serve as an interior approximation for all but finitely many $\Omega_{n_{k_j}}$. Let us take $\Omega'=\cup K_l$. By the condition \eqref{eq:uniform}, $\Omega'$ is regular and the limit in the sense of Definition \ref{def:interior}.
  \end{proof}
By Proposition~\ref{prop:interior_compact} and the already established implication $(iii)\implies (i)$, the corresponding subsequence $\omega_{n_k}$ converges to  $\omega_{\Omega'}(w,\cdot)$. So $\omega=\omega_{\Omega'}(w,\cdot)$, and thus $\Omega=\Omega'$. Hence, $(\Omega_n, w_n)$ converges to $(\Omega,w)$ in the sense of Definition \ref{def:interior}.
\end{proof}
\section{Notions of convergence for domains in \texorpdfstring{$\RR^d$, $d\geq3$}{{\bf R}d, d>=3}}

In this section, we consider sequences of domains $\Omega_n$ in $\RR^d$, $d\geq3$ with marked points $w_n\in\Omega_n$. As in the previous section, we will always assume that $\partial\Omega_n$ is the closure of its set of regular points.

While the definitions of convergence in the sense of harmonic measure (Definition \ref{def:measconv}) works in this case without any changes, we need to modify Definition \ref{def:interior} to take care of the behavior at infinity.

\begin{definition}\label{def:interior_hiD} Let $\Omega_n,\ n\in\NN$ be a sequence of domains in  $\RR^d$, and $w_n\in\Omega_n$. Let also $\Omega$ be a domain containing $w$. We say that  the sequence $(\Omega_{n}, w_n)$,  and  $(\Omega, w)$ have \defin{arbitrarily good common interior approximations} if $w_n\to w$ and for every $ \eps > 0$ and $R>0$, there exists $N \in \N$ and a closed connected set $K_{\eps, R} \subset\Omega \cap \bigcap_{n\geq N}\Omega_{n}\cap D(0, R)$ containing $w$ such that
$$
\dist(x,\partial \left(\Omega\cap D(0, R)\right))<\eps \quad \text{ and } \quad \dist(x, \partial\left(\Omega_n\cap D(0, R)\right))<\eps
$$
holds  for all $x\in \partial K_{\eps, R}$ and all $n\geq N$, where the distance is taken in the standard Euclidean metric.
\end{definition}

\noindent
Note that Lemma \ref{lem:independence} still holds for this modified definition of convergence, with the same proof.
\\
We have an analogue of Theorem \ref{thm:planar}:

\begin{theorem}\label{thm:general}
Let $\Omega$ be a regular domain, and let $(\Omega_n)$ be a sequence of domains in $\RR^d$. Assume that $(\Omega_n)$ are \emph{uniformly regular} (in the sense of \eqref{eq:uniform}) and for some sequence $w_n\in\Omega_n$, $w_n\to w\in\Omega$, we have
\begin{equation}\label{eq:infty}
\forall\eps>0\exists R>0, N\in\NN\text{ such that } \omega_{\Omega_n\cap D(0,2R)}(w_n, D(0, R))>1-\eps
\end{equation}
    Then the following are equivalent.
\begin{itemize}
\item[(i)] For any $w\in \Omega$ and \emph{any} sequence $w_n\in\Omega_n$ converging to $w$, the sequence $(\Omega_n, w_n)$ converges to $(\Omega, w)$ in the sense of harmonic measure.
\item[(ii)] For any $w\in \Omega$ and \emph{some} sequence $w_n\in\Omega_n$ converging to $w$, the sequence $(\Omega_n, w_n)$ converges to $(\Omega, w)$ in the sense of harmonic measure.
\item[(iii)] For some $w\in\Omega$ and some $w_n\in\Omega_n$ converging to $w$, the sequence $(\Omega_n, w_n)$ converges to $(\Omega, w)$ in the sense of Definition \ref{def:interior}.
\end{itemize}
\end{theorem}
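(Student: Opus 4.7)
The plan is to mirror the proof of Theorem~\ref{thm:planar} closely; the new technical element in the higher-dimensional setting is the need to truncate to balls $D(0,R)$, for which the escape-at-infinity hypothesis \eqref{eq:infty} is exactly what is required. The implication $(i)\Rightarrow(ii)$ is trivial.

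For $(iii)\Rightarrow(i)$, Proposition~\ref{prop:convergence} applies almost verbatim, since its proof uses only the strong Markov property together with the local hitting estimate near $\partial\Omega$ (and $\partial\Omega_n$), which is precisely what uniform regularity \eqref{eq:uniform} supplies. The only novelty is that the boundary of the common interior approximation $K_{\eps,R}$ now has two parts: one $\eps$-close to $\partial\Omega$ and to $\partial\Omega_n$, and one possibly lying on $\partial D(0,R)$. I would fix $\delta>0$, then choose $R$ so large that, by \eqref{eq:infty} for the $\Omega_n$ and by the regularity of $\Omega$ (which makes $\omega_\Omega(w,\RR^d\setminus D(0,R))$ arbitrarily small), the probability that a Brownian motion started at $w_n$ (resp.\ $w$) first exits $K_{\eps,R}$ through $\partial D(0,R)$ is at most $\delta/(10M)$ uniformly in $n$. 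This contributes at most $\delta/10$ to the Wasserstein integral, and the remaining contribution is handled by the argument of Proposition~\ref{prop:convergence}.

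For $(ii)\Rightarrow(iii)$, I would adapt Proposition~\ref{prop:interior_compact}. Fix a sequence $w_n\to w$ satisfying $(ii)$; uniform regularity \eqref{eq:uniform} together with $\omega_{\Omega_n}(w_n,D(w,r/2))\to 0$ (for $r=\dist(w,\partial\Omega)$) yields some $r_0<r/4$ with $\dist(w_n,\partial\Omega_n)>2r_0$ for all large $n$. For every pair $(l,R)$, fix a finite $2^{-l-2}r_0$-net of $\overline{D(0,R)}$, and construct $K_{l,R}$ from the maximal connected union of the corresponding balls (each suitably enlarged) that contains $w$ and whose enlargement sits in infinitely many $\Omega_n$. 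A diagonal procedure over $(l,R)$ extracts a subsequence $(\Omega_{n_k},w_{n_k})$ converging in the sense of Definition~\ref{def:interior_hiD} to a regular domain $(\Omega',w)$. Crucially, $\Omega'$ inherits an escape-at-infinity estimate from \eqref{eq:infty}, since the $K_{l,R}$ are truncated to $D(0,R)$ and nested. Applying the already-proved $(iii)\Rightarrow(i)$ to the subsequence, and comparing with hypothesis $(ii)$, yields $\omega_{\Omega'}(w,\cdot)=\omega_\Omega(w,\cdot)$; regularity of both domains then forces $\Omega'=\Omega$. Since every subsequence produces the same limit $\Omega$, the original sequence converges in the sense of Definition~\ref{def:interior_hiD}.

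The main obstacle I anticipate is ensuring that control at infinity survives each step of the argument: in $(iii)\Rightarrow(i)$ it is used directly to bound tails, while in $(ii)\Rightarrow(iii)$ it must be shown to pass to the subsequential limit $\Omega'$ in order to apply $(iii)\Rightarrow(i)$ there. Both instances reduce to the fact that \eqref{eq:infty} is a uniform bound over $n$ on the harmonic-measure mass outside $D(0,R)$, and this uniform bound is inherited by any domain produced as a Definition~\ref{def:interior_hiD}-limit, since the approximating sets $K_{l,R}$ are themselves contained in $D(0,R)$ and the construction is monotone in $R$.
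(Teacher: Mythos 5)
Your proposal is correct and follows essentially the same route as the paper: trivial $(i)\Rightarrow(ii)$; $(iii)\Rightarrow(i)$ by rerunning the stopping-time argument of Proposition~\ref{prop:convergence} with the boundary of $K_{\eps,R}$ split into a part near $\partial\Omega,\partial\Omega_n$ (controlled by uniform regularity) and a part on $\partial D(0,R)$ (controlled by \eqref{eq:infty}); and $(ii)\Rightarrow(iii)$ by the same subsequential-compactness argument as Proposition~\ref{prop:interior_compact}. The one adjustment the paper makes that you elide is that on the noncompact space $\RR^d$ one cannot phrase the estimate as a bound on ``the Wasserstein integral'' (1-Lipschitz test functions are unbounded, so no uniform $M$ exists); the paper instead fixes a compactly supported continuous $g$ with $M=\|g\|_\infty$ and bounds $\left|\int g\,d\omega_{\Omega}(w,\cdot)-\int g\,d\omega_{\Omega_n}(w_n,\cdot)\right|$ directly, which is exactly the form in which your tail estimate should be deployed.
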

\begin{proof}
As before, the implication $(i)\implies(ii)$ is trivial.

Formally, we can no longer use Proposition \ref{prop:convergence} to establish $(iii)\implies (i)$, in particular, because we can no longer use Wasserstein-Kantorovich distance.

Instead, we fix a function $g$ with compact support and $\delta>0$. Let $M:=\|g\|_\infty$.

Observe that for uniformly regular sequences of domains, the condition \eqref{eq:infty} does not depend on the choice of $w_n$ or $w$.
Thus by \eqref{eq:infty}, we can find $R$ such that
$$\omega_{\Omega_n\cap D(0,2R)}(w_n, D(0, R))>1-\frac{\delta}{10M}\text{ and }\omega_{\Omega\cap D(0,2R)}(w, D(0, R))>1-\frac{\delta}{10M}.$$
We can also select $\eta>0$ so that
$$\|x_1-x_2\|<\eta\implies |g(x_1)-g(x_2)|<\frac\delta{10}.$$
Finally, fix an $\eps>0$ so that
$$\dist(z,\partial\Omega_n)<\eps\implies\omega_{\Omega_n}(D(z, \delta))>1-\frac\delta{10}$$
and
$$\dist(z,\partial\Omega)<\eps\implies\omega_{\Omega}(D(z, \eta))>1-\frac\delta{10}.$$

Let  $K_{\eps} \ni w$ be a common interior approximation corresponding to the just selected $\eps$ and $R$.Let us use the notations from this proof of Proposition \ref{prop:convergence}. The same stopping time reasoning as in that proof shows that, provided $n$ is large enough
\begin{multline}
\left|\int g\,d\omega_{\Omega}(w, \cdot)-\int g\,d\omega_{\Omega_n}(w_n, \cdot)\right|\leq\\ \EE\left[|g(B_{T_w}) - g(B_{T_w^{\eps}})|\right]+\EE\left[|g(B_{T^n_w}) - g(B_{T_w^{\eps}})|\right]<\delta.
\end{multline}
Since it holds for an arbitrary compactly supported continuous $g$ and $\delta>0$, $(i)$ follows.

Finally, the proof of the implication $(ii)\implies(iii)$ is the same as in the proof of Theorem \ref{thm:planar}.
\end{proof}

\end{document}